\documentclass[11 pt]{amsart}

\usepackage{lineno,hyperref}
\usepackage{amsmath,amssymb}
\usepackage{enumitem}
\usepackage{lineno}
\newtheorem{thm}{Theorem}[section]
\newtheorem{defn}[thm]{Definition}
\newtheorem{prop}[thm]{Proposition}
\newtheorem{cor}[thm]{Corollary}
\newtheorem{lem}[thm]{Lemma}











\begin{document}


\title{Characterizations of Weaving $K$-Frames}

\author[A. Bhandari]{Animesh Bhandari $^\dagger$}

\address{$^\dagger$Department of Mathematics\\ NIT Meghalaya\\ Shillong 793003\\ India}
\email{animesh@nitm.ac.in}

\author[D. Borah]{Debajit Borah {$^{\dagger}$}}
\address{$^\dagger$Department of Mathematics\\ NIT Meghalaya\\ Shillong 793003\\ India}
\email{debajitborah439@gmail.com}

\author[S. Mukherjee]{Saikat Mukherjee {$^{\dagger *}$}}
\address{$^\dagger$Department of Mathematics\\ NIT Meghalaya\\ Shillong 793003\\ India}
\email{saikat.mukherjee@nitm.ac.in}

$\thanks{*Corresponding author}$
\subjclass[2010]{Primary 42C15; Secondary 47A30}


\begin{abstract}
In distributed signal processing frames play significant role as redundant building blocks. Bemrose et al. were motivated from this concept, as a result they introduced weaving frames in Hilbert space. Weaving frames have useful applications in sensor networks, likewise weaving $K$-frames have been proved to be useful during signal reconstructions from the range of a bounded linear operator $K$. This article focuses on study, characterization of weaving $K$-frames in different spaces. Paley-Wiener type perturbations and conditions on erasure of frame components have been assembled to scrutinize woven-ness of $K$-frames.\\

\noindent \textbf{Keywords: } frame, $K$-frame, weaving.
\end{abstract}

\maketitle

\section{Introduction}\label{Sec-Preli}
The concept of Hilbert frames was first introduced by Duffin and Schaeffer \cite{DuSc52} in 1952. After several decades, in 1986, frame theory has been popularized by the groundbreaking work by Daubechies, Grossman and Meyer \cite{DaGrMa86}.
Since then frame theory has been widely used by mathematicians and engineers in various fields of mathematics and engineering, namely, signal processing \cite{Fe99}, sensor networks \cite{CaKuLiRo07}, etc.

Also frame theory literature became popularized through several generalizations, likewise, fusion frame (frames of subspaces) \cite{CaKu04} , $G$-frame (generalized frames) \cite{Su06}, $K$-frame (atomic systems) \cite{Ga12}, $K$-fusion frame (atomic subspaces) \cite{Bh17}, etc. and these generalizations have been proved to be useful in various applications.






For detail discussion regarding frames, readers are referred to the books \cite{CaKu12, Ch08}.

Throughout the manuscript, $\mathcal{H}$ is denoted as separable Hilbert space with inner product, $\langle \cdot, \cdot \rangle$, and associated norm, $\|\cdot\|$, on it. We denote by $\mathcal{L}(\mathcal{H}_1, \mathcal{H}_2)$ the space of all bounded linear operators from  $\mathcal{H}_1$ into $\mathcal{H}_2$,  $\mathcal L(\mathcal H)$ for $\mathcal L(\mathcal H, \mathcal H)$ and $R(K)$ as range of the operator $K$. Further, $T^{\dag}$ denotes the Moore-Penrose pseudo inverse of $T$ and $P_M$ denotes the orthogonal projection on $M$. We also use the notations $[m]$ for the set $\{1, 2, \cdots , m\}$ and $\mathcal I$ for finite or countable index set.

\subsection{Frame} A collection  $\{ f_i \}_{i\in \mathcal I}$ in $\mathcal{H}$ is called a \emph{frame} if there exist constants $A,B >0$ such that \begin{equation}\label{Eq:Frame} A\|f\|^2~ \leq ~\sum_{i\in \mathcal  I} |\langle f,f_i\rangle|^2 ~\leq ~B\|f\|^2,\end{equation}for all $f \in \mathcal{H}$. The numbers $A, B$ are called \emph{frame bounds}. The supremum over all $A$'s and infimum over all $B$'s satisfying above inequality are called the \emph{optimal frame bounds}.
If a collection satisfies only the right inequality in equation (\ref{Eq:Frame}), it is called a {\it Bessel sequence}.

Given a frame $\{f_i\}_{i\in \mathcal  I}$ for  $\mathcal H$, the corresponding synthesis operator is a bounded linear operator $T: l^2(\mathcal  I)\rightarrow \mathcal H$ and is defined by $T\{c_i\} =  \sum \limits_{i\in \mathcal  I} c_i f_i$. The adjoint of $T$, $T^*: \mathcal{H} \rightarrow l^2(\mathcal  I)$, given by $T^*f = \{\langle f, f_i\rangle\}_{i \in \mathcal  I}$, is called the analysis operator. The frame operator, $S=TT^*: \mathcal H \rightarrow \mathcal H$, is defined by $$Sf=TT^*f = \sum_{i\in \mathcal  I} \langle f, f_i\rangle f_i.$$ It is well-known that the frame operator is bounded, positive, self-adjoint and invertible.



\subsection{$K$-Frame}


There are several generalizations of frame, all of these generalizations have been proved to be useful in many applications. In the sequel, we discuss results on one such generalization of frame, called {\bf $ K$-frame}. The notion of $ K$-frames was introduced by L. G\v{a}vru\c{t}a in \cite{Ga12} to study the {\it atomic systems} with respect to a bounded linear operator $K$ in $\mathcal{H}$.

\begin {defn} ({\bf $K$-Frame})
Let $K \in \mathcal L(\mathcal H)$. A collection  $\{ f_i \}_{i \in \mathcal I}$ in $\mathcal{H}$ is called a \emph{ $K$-frame} if there exist constants $A,B >0$ such that \begin{equation}A\|K^*f\|^2~ \leq ~\sum_{i \in \mathcal I} |\langle f,f_i \rangle|^2 ~\leq ~B\|f\|^2,\end{equation} for all $f \in \mathcal{H}$. The numbers $A,B$ are called \emph{$K$-frame bounds}. The above collection is said to be a {\it tight $K$-frame} if
\begin{equation}\label{Eq:K-tight-fram}
A\|K^* f\|^2 = \sum_{i\in \mathcal I} | \langle f,f_i \rangle |^2, \end{equation}
for all $f\in \mathcal H$.
\end{defn}

$K$-frames are more general than ordinary frames in the sense that the lower frame bound only holds for the elements in the range of $K$. Because of the higher generality of $K$-frames, the associated $K$-frame operator need not be invertible.




\subsection{{Woven and $K$-Woven Frame}} In general in a sensor networking system, a frame can be characterized by signals. If there are two frames, having same characteristics, then in absence of a frame element from the first frame, still we are able to get an error free result on account of the replacement of the frame element of first frame by the frame element of second frame.

In this context basically one can think of the  intertwinedness between two sets of sensors, or in general between two frames, which leads to the idea of weaving frames. Weaving frames or woven frames were introduced by Bemrose et al. in \cite{BeCaGrLaLy16}. Later the concept of woven-ness has been characterized by Bhandari et al. in \cite{BhMu19} and characterization of weaving $K$-frames has been produced by Deepshikha et al. in \cite{DeVa18}.


\begin{defn}
	In $\mathcal H$, two frames $\lbrace f_i \rbrace_{i \in \mathcal I}$ and  $\lbrace g_i \rbrace_{i \in \mathcal I}$ are said to be {\it woven} if for every $\sigma \subset \mathcal I$,  $\lbrace f_i \rbrace_{i \in \sigma} \cup \lbrace g_i \rbrace_{i \in \sigma^c}$ also forms a frame for $\mathcal H$ and the associated frame operator for every weaving is defined as \cite{BhMu19},
	$$S_{\mathcal F \mathcal G}f=\sum_{i \in \sigma}\langle f,f_i \rangle f_i + \sum_{i \in \sigma^c}\langle f,g_i \rangle g_i, ~~ \text{for}~ \text{all}~f \in \mathcal H.$$
\end{defn}

\begin{defn} \cite{DeVa18} A family of K-frames $\{\{\phi_{ij}\}_{j=1}^\infty: i \in[m]\} $ for $\mathcal{H}$ is said to be K-woven if there exist universal positive constants $ A, B $ such that for any partition $\{\sigma_i\}_{i\in[m]}$ of $\mathcal{I}$, the family $\bigcup_{i \in [m]} \{\phi_{ij}\}_{j\in\{\sigma_i\}}$ is a K-frame for $\mathcal{H}$ with lower and upper K-frame bounds A and B, respectively. Each family $\bigcup_{i \in [m]} \{\phi_{ij}\}_{j\in \sigma_i}$ is called a $K$-weaving.
\end{defn}

The following result discuss the woven-ness of Bessel sequences by means of the associated synthesis operator.

\begin{prop} \cite {BeCaGrLaLy16} \label{Bessel} Let $\{f_{ij}\}_{i \in \mathcal I}$ be a collection of Bessel sequences in $\mathcal H$ with bounds $B_j$'s for every $j \in [m]$, then every weaving forms a Bessel sequence with bound $\sum \limits_{j \in [m]} B_j$ and norm of corresponding synthesis operator is bounded by $\sqrt{\sum \limits_{j \in [m]} B_j}$.
	
\end{prop}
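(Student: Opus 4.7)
Fix an arbitrary partition $\{\sigma_j\}_{j \in [m]}$ of $\mathcal{I}$ and any $f \in \mathcal{H}$. To bound the Bessel sum for the weaving $\bigcup_{j \in [m]} \{f_{ij}\}_{i \in \sigma_j}$, I would split it according to the partition, enlarge each inner sum from $\sigma_j$ up to the full index set $\mathcal{I}$ (which only increases a sum of non-negative terms), and then apply the hypothesized Bessel bound $B_j$ for $\{f_{ij}\}_{i \in \mathcal{I}}$ in each slice. Adding the resulting $m$ inequalities gives an upper bound of $\bigl(\sum_{j \in [m]} B_j\bigr)\|f\|^2$, which is exactly the Bessel bound claimed for the weaving.

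For the synthesis operator, I would invoke the standard fact already recalled in the preliminaries: for any Bessel sequence the optimal (squared) upper bound equals the squared operator norm of the associated synthesis operator (equivalently, of its adjoint, the analysis operator). Since the previous paragraph shows every weaving is Bessel with bound $\sum_{j \in [m]} B_j$, its synthesis operator automatically has norm at most $\sqrt{\sum_{j \in [m]} B_j}$.

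There is no real obstacle here: the proof is monotonicity of a sum of non-negative terms followed by the definition of a Bessel bound. The only point worth flagging is that $\{\sigma_j\}_{j \in [m]}$ is genuinely a partition (not merely a collection of subsets), so each index $i$ contributes to the weaving through exactly one $j$; this disjointness makes the outer double sum unambiguous and avoids any overcounting when we collapse it into two independent sums over $[m]$ and $\mathcal{I}$.
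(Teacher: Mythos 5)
Your argument is correct and is exactly the standard one: enlarge each sum over $\sigma_j$ to all of $\mathcal I$, apply the individual Bessel bounds, and read off the synthesis-operator norm from the Bessel bound. The paper itself states this proposition as a cited result from Bemrose et al.\ without reproducing a proof, and your proof coincides with the usual argument given there, so there is nothing to add.
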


The following Lemma provides a discussion regarding Moore-Penrose  pseudo-inverse. For detail discussion regarding the same we refer \cite{Ch08, Ka80}.


\begin{lem}\label{Moore} Let $\mathcal H$ and $\mathcal K$ be two Hilbert spaces and $T \in \mathcal L(\mathcal H,\mathcal K)$ be a closed range operator, then the followings hold:
	\begin{enumerate}
		\item $TT^{\dag}=P_{T(\mathcal H)}$, $T^{\dag}T=P_{T^*(\mathcal K)}$
		\item $\frac {\|f\|} {\|T^{\dag}\|} \leq \|T^*f\|$ for all $f \in T(\mathcal H)$.
		\item $TT^{\dag}T=T$, $T^{\dag}TT^{\dag}=T^{\dag}$, $(TT^{\dag})^*=TT^{\dag}$, $(T^{\dag}T)^*=T^{\dag}T$.
	\end{enumerate}
\end{lem}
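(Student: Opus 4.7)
The plan is to prove the three items in the order (3), (1), (2), since (1) is a consequence of (3) and (2) uses (1). I would first recall the standard construction of $T^{\dag}$ for closed-range $T$: because $T(\mathcal H)$ is closed, we have the orthogonal decompositions $\mathcal H = N(T) \oplus T^{*}(\mathcal K)$ and $\mathcal K = T(\mathcal H) \oplus N(T^{*})$, and the restriction $\widetilde T := T|_{T^{*}(\mathcal K)} : T^{*}(\mathcal K) \to T(\mathcal H)$ is a bijection. One then defines $T^{\dag} y := \widetilde T^{-1}(P_{T(\mathcal H)} y)$, extended by $0$ on $N(T^{*})$. Boundedness of $T^{\dag}$ is automatic from the closed range theorem applied to $\widetilde T$.

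For item (3), I would verify the four Moore--Penrose identities directly from this construction. If $x \in \mathcal H$, write $x = x_1 + x_2$ with $x_1 \in N(T)$ and $x_2 \in T^{*}(\mathcal K)$; then $Tx = \widetilde T x_2 \in T(\mathcal H)$, so $TT^{\dag}Tx = \widetilde T \widetilde T^{-1} \widetilde T x_2 = \widetilde T x_2 = Tx$, giving $TT^{\dag}T = T$. A symmetric argument using $y = \widetilde T x_2 + y'$ with $y' \in N(T^{*})$ yields $T^{\dag}TT^{\dag} = T^{\dag}$. For the self-adjointness identities, it suffices to observe from the construction that $TT^{\dag}$ acts as the identity on $T(\mathcal H)$ and as $0$ on $N(T^{*}) = T(\mathcal H)^{\perp}$, so it is the orthogonal projection onto $T(\mathcal H)$, which is self-adjoint; analogously $T^{\dag}T$ is the orthogonal projection onto $T^{*}(\mathcal K)$.

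For item (1), the computation just made in fact establishes $TT^{\dag} = P_{T(\mathcal H)}$ and $T^{\dag}T = P_{T^{*}(\mathcal K)}$ directly. Alternatively, once (3) is in hand, observe $(TT^{\dag})^{2} = T(T^{\dag}TT^{\dag}) = TT^{\dag}$, so $TT^{\dag}$ is an idempotent, it is self-adjoint by (3), and its range equals $T(\mathcal H)$ (one inclusion is clear; the other follows since $TT^{\dag}Tx = Tx$), hence it is the orthogonal projection onto $T(\mathcal H)$; similarly for $T^{\dag}T$.

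Finally, for item (2), let $f \in T(\mathcal H)$. By item (1), $f = TT^{\dag} f$, and the self-adjointness of $TT^{\dag}$ from (3) gives $f = (TT^{\dag})^{*} f = (T^{\dag})^{*} T^{*} f$. Taking norms yields
\[
\|f\| \;\leq\; \|(T^{\dag})^{*}\|\, \|T^{*}f\| \;=\; \|T^{\dag}\|\, \|T^{*}f\|,
\]
which, after dividing by $\|T^{\dag}\|$, is precisely the claimed estimate. The whole argument is standard, and the only subtle point is the justification that $T^{\dag}$ is actually a bounded operator, which is where the closed-range hypothesis on $T$ enters and which is the main place one would want to be careful.
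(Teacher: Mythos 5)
Your proof is correct and is the standard argument. Note that the paper does not actually prove this lemma --- it is stated as a known fact with a pointer to Christensen's book and Kato, and your construction of $T^{\dag}$ via the bijection $\widetilde T := T|_{T^*(\mathcal K)} : T^*(\mathcal K) \to T(\mathcal H)$, followed by the verification of the Moore--Penrose identities and the estimate $\|f\| = \|(T^{\dag})^*T^*f\| \leq \|T^{\dag}\|\,\|T^*f\|$ for $f \in T(\mathcal H)$, is precisely the argument those references give.
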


\section{Main Results}\label{Results}

We begin this section by providing two intertwining results on $K$-frames between two separable Hilbert spaces.
\begin{lem}\label{lem_K_intertwine1}
Let $ K \in \mathcal{L}(\mathcal{H}_1)$,  $ T \in \mathcal{L}(\mathcal{H}_1,\mathcal{H}_2)$, and $\mathcal{F} = \{f_i\}_{i \in \mathcal I} $ be a K-frame for $\mathcal{H}_1$. Then $T\mathrm{\mathcal F} = \{Tf_i\}_{i \in \mathcal{I}} $ is a $TKT^*$- frame for $\mathcal{H}_2$.
\end{lem}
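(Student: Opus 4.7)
The plan is to verify the two inequalities in the definition of a $TKT^*$-frame directly, by reducing each to the corresponding inequality for the original $K$-frame $\mathcal{F}$ applied to the vector $T^*g \in \mathcal{H}_1$. The key observation that makes everything collapse is the adjoint identity $\langle g, Tf_i\rangle = \langle T^*g, f_i\rangle$, which converts a sum over the image frame evaluated at $g \in \mathcal{H}_2$ into a sum over the original frame evaluated at $T^*g \in \mathcal{H}_1$. A second, equally routine, observation is that $(TKT^*)^* = TK^*T^*$, so the lower frame inequality we need to establish has $\|TK^*T^*g\|$ on the left-hand side.

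First I would fix $g \in \mathcal{H}_2$ and compute
\begin{equation*}
\sum_{i \in \mathcal{I}} |\langle g, Tf_i\rangle|^2 = \sum_{i \in \mathcal{I}} |\langle T^*g, f_i\rangle|^2.
\end{equation*}
For the upper bound, I would apply the Bessel-type upper $K$-frame inequality to $T^*g$ to get $\sum_i |\langle T^*g, f_i\rangle|^2 \le B\|T^*g\|^2 \le B\|T\|^2\|g\|^2$, which yields the explicit upper bound $B\|T\|^2$ for the candidate $TKT^*$-frame.

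For the lower bound, applying the lower $K$-frame inequality to $T^*g$ gives $A\|K^*T^*g\|^2 \le \sum_i |\langle T^*g, f_i\rangle|^2$. Since $\|K^*T^*g\| = \|(TK^*T^*)g\| / \textrm{(nothing)}$ is not quite what we want, I would be careful here: what we really need is $\|(TKT^*)^*g\|^2 = \|TK^*T^*g\|^2$, and a further application of $\|T\|$ gives $\|TK^*T^*g\| \le \|T\|\,\|K^*T^*g\|$, so $\|K^*T^*g\|^2 \ge \|T\|^{-2}\|TK^*T^*g\|^2$ (assuming $T\neq 0$; the case $T=0$ is trivial since both sides of the $TKT^*$-frame inequality vanish). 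Combining, I obtain
\begin{equation*}
\frac{A}{\|T\|^2}\|(TKT^*)^*g\|^2 \le \sum_{i \in \mathcal{I}} |\langle g, Tf_i\rangle|^2 \le B\|T\|^2\|g\|^2,
\end{equation*}
which is exactly the $TKT^*$-frame condition with bounds $A/\|T\|^2$ and $B\|T\|^2$.

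There is no real obstacle here — the proof is essentially a two-line adjoint calculation. The only minor subtlety to address is the step where one passes from $\|K^*T^*g\|$ to $\|TK^*T^*g\|$ in the lower bound, where one must introduce the factor $\|T\|^{-2}$; the trivial case $T=0$ should be handled (or excluded) separately.
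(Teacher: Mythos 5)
Your proof is correct and follows essentially the same route as the paper: convert $\langle g, Tf_i\rangle$ to $\langle T^*g, f_i\rangle$, apply the $K$-frame inequalities to $T^*g$, and absorb the factor $\|T\|^{-2}$ via $\|TK^*T^*g\| \le \|T\|\,\|K^*T^*g\|$ to obtain the bounds $A/\|T\|^2$ and $B\|T\|^2$. Your explicit handling of the degenerate case $T=0$ is a small point the paper glosses over, but otherwise the arguments coincide.
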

\begin{proof}
	Since $\{f_i\}_{i \in \mathcal{I}}$ is a K-frame for $\mathcal{H}_1$, then there exists $A, B>0$ so that
	\begin{align*}
 A\|K^*h_1\|^2 \leq \sum\limits_{i \in \mathcal I} |\langle h_1,f_i \rangle |^2 \leq B \|h_1\|^2,
 \end{align*}
for every $h_1 \in \mathcal{H}_1 $. Now for every $h_2  \in \mathcal{H}_2$ we obtain,	
\begin{align*}
\sum\limits_{i \in \mathcal I} |\langle h_2,Tf_i \rangle|^2 &  \leq B\|T^*h_2\|^2 \leq   B\|T\|^2 \|h_2\|^2,
\end{align*}
and
\begin{align*}
\frac {A} { \|T\|^2 } \|(TKT^*)^*h_2\|^2 &  \leq A \|K^*(T^*h_2)\|^2 \leq \sum\limits_{i \in \mathcal I} |\langle h_2,Tf_i \rangle|^2.
\end{align*}
Therefore $ T \mathcal F $ is a $ TKT^*$- frame for $\mathcal H_ 2$.
\end{proof}

\begin{lem}\label{lem_K_intertwine2}
 Let $\{f_i\}_{i \in \mathcal I} \subset \mathcal H_1$, $T \in \mathcal L(\mathcal H_1, \mathcal H_2)$ be one-one, closed range operator so that $\{T f_i\}_{i \in \mathcal I}$ is a $K$-frame for $R(T) \subset \mathcal H_2$ for some $K \in \mathcal  L(\mathcal H_2)$. Then $\{f_i\}_{i \in \mathcal I}$ is a $T^{\dag} K T$-frame for $\mathcal H_1$.
\end{lem}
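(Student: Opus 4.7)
The plan is to reduce the desired inequality for $\{f_i\}$ in $\mathcal H_1$ to the given $K$-frame inequality for $\{Tf_i\}$ in $R(T)$ by substituting a cleverly chosen test vector. The only real choice is what element of $R(T)$ to feed into the hypothesis, and the natural candidate, suggested by the form of the operator $T^{\dag}KT$, is $g = (T^{\dag})^{*}f$ for $f\in\mathcal H_1$.

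First, I would record the structural consequence of the hypothesis that $T$ is one-one with closed range. One-oneness plus closed range makes $R(T^{*}) = \mathcal H_1$, so by Lemma \ref{Moore} we get the identity $T^{\dag}T = P_{T^{*}(\mathcal H_2)} = I_{\mathcal H_1}$, whence $T^{*}(T^{\dag})^{*} = I_{\mathcal H_1}$. Dualizing the orthogonal projection identity $TT^{\dag}=P_{R(T)}$ from Lemma \ref{Moore} then yields $N(T^{\dag}) = R(T)^{\perp}$ and therefore $R\bigl((T^{\dag})^{*}\bigr) = R(T)$, so the element $(T^{\dag})^{*}f$ really does lie in $R(T)$ and the hypothesis is available at that point.

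Next, I would plug $g = (T^{\dag})^{*}f$ into the given $K$-frame inequality
$$A\,\|K^{*}g\|^{2}\;\leq\;\sum_{i\in\mathcal I}|\langle g,Tf_i\rangle|^{2}\;\leq\;B\,\|g\|^{2},$$
and use $T^{*}(T^{\dag})^{*} = I$ to collapse the inner product: $\langle (T^{\dag})^{*}f,Tf_i\rangle = \langle T^{*}(T^{\dag})^{*}f,f_i\rangle = \langle f,f_i\rangle$. The upper estimate is then immediate: $\sum_i|\langle f,f_i\rangle|^{2}\leq B\,\|(T^{\dag})^{*}f\|^{2}\leq B\,\|T^{\dag}\|^{2}\,\|f\|^{2}$. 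For the lower estimate, I would rewrite the hypothesis' left-hand side as $A\,\|K^{*}(T^{\dag})^{*}f\|^{2}$ and then insert $T^{*}$ by using $\|T^{*}h\|\leq \|T\|\,\|h\|$ with $h = K^{*}(T^{\dag})^{*}f$. This gives
$$\tfrac{A}{\|T\|^{2}}\,\|T^{*}K^{*}(T^{\dag})^{*}f\|^{2}\;\leq\;A\,\|K^{*}(T^{\dag})^{*}f\|^{2}\;\leq\;\sum_{i\in\mathcal I}|\langle f,f_i\rangle|^{2},$$
and $T^{*}K^{*}(T^{\dag})^{*} = (T^{\dag}KT)^{*}$ is exactly what is required, so the $T^{\dag}KT$-frame bounds are $A/\|T\|^{2}$ and $B\,\|T^{\dag}\|^{2}$.

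I do not anticipate any serious obstacle; the only non-routine step is spotting the substitution $g = (T^{\dag})^{*}f$ and verifying that it lands in $R(T)$, which is where the pseudo-inverse identities from Lemma \ref{Moore} (especially $T^{\dag}T = I$ coming from $T$ being one-one with closed range) do all the work. The rest is symbol manipulation together with the elementary bounds $\|(T^{\dag})^{*}f\|\leq\|T^{\dag}\|\|f\|$ and $\|T^{*}h\|\leq\|T\|\|h\|$.
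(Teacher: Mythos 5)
Your proof is correct and follows essentially the same route as the paper's: both substitute the element $(T^{\dag})^{*}f = T^{*\dag}f \in R(T)$ into the $K$-frame inequality for $\{Tf_i\}$, use $T^{*}(T^{\dag})^{*}=I_{\mathcal H_1}$ (from $T$ being one-one with closed range) to identify $\langle (T^{\dag})^{*}f, Tf_i\rangle$ with $\langle f,f_i\rangle$, and then obtain the same bounds $A/\|T\|^{2}$ and $B\|T^{\dag}\|^{2}$. The paper merely arrives at that test vector via the decomposition $h_1=T^{*}h_2$ with $h_2=h_2^{(1)}+h_2^{(2)}$, whereas you write it down directly and verify it lands in $R(T)$; the substance is identical.
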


\begin{proof} Since $\{T f_i\}_{i \in \mathcal I}$ is a $K$-frame for $R(T)$, there exist $A, B > 0$ such that for every $h_2 ^{(1)} \in R(T)$ we have,
	\begin{equation} \label{K-frame}
	A \|K^* h_2 ^{(1)}\|^2 \leq \sum \limits_{i \in \mathcal I} |\langle h_2 ^{(1)}, T f_i \rangle|^2 \leq B \|h_2 ^{(1)}\|^2.
	\end{equation}
Now since $T$ is one-one and $R(T)$ is closed, for every $h_1 \in \mathcal H_1$ there exists $h_2 \in \mathcal H_2$ so that $h_1=T^{*} h_2$ and  for every $h_2 \in \mathcal H_2$ we have $h_2 = h_2 ^{(1)} + h_2 ^{(2)}$, where $h_2 ^{(1)} \in R(T)$ and $h_2 ^{(2)} \in R(T)^{\perp}$.

Therefore, $ h_2 ^{(1)} = T^{*\dag}(h_1 - T^*h_2 ^{(2)}) = T^{*\dag} h_1$. Hence using equation (\ref{K-frame}) we obtain,
\begin{eqnarray*}
\sum_{i \in \mathcal I} | \langle h_1,f_i \rangle|^2 = \sum\limits_{i \in \mathcal I} | \langle h_2^{(1)},T f_i \rangle|^2 & \geq &  A \|K^*h_2^{(1)}\|^2 \\
& \geq & \frac {A} {\|T\|^2} \|(T^{\dag} K T)^* h_1\|^2.
	\end{eqnarray*}
Thus the conclusion follows from the following,
$$\sum\limits_{i \in \mathcal I} | \langle h_1, f_i \rangle|^2 = \sum\limits_{i \in \mathcal I} | \langle h_2^{(1)}, T f_i \rangle|^2 \leq B \|T^{\dag}\|^2 \|h_1\|^2.$$
\end{proof}

As a consequence of Lemma \ref{lem_K_intertwine1} and \ref{lem_K_intertwine2}, the following two propositions show that $K$-woven-ness is preserved under bounded linear operators.

\begin{prop} \label{prop_K_intertwine1} Let $K \in \mathcal L(\mathcal H_1)$, $\mathcal{F}=\{f_i\}_{i\in \mathcal{I}}$ and $\mathcal{G}=\{g_i\}_{i\in \mathcal{I}}$ be $K$-frames for $\mathcal{H}_1$ and suppose $T \in \mathcal{L}(\mathcal{H}_1,\mathcal{H}_2)$. If $\mathcal{F}$ and $\mathcal{G}$ are K-woven in $\mathcal{H}_1$, then $T\mathcal{F}$ and $T\mathcal{G}$ are $TKT^*$-woven in $\mathcal{H}_2$.
\end{prop}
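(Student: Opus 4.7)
The plan is to reduce the statement directly to Lemma \ref{lem_K_intertwine1}, using the fact that weaving is a purely index-set operation that commutes with applying the bounded operator $T$ coordinate-wise. Since $\mathcal{F}$ and $\mathcal{G}$ are $K$-woven in $\mathcal{H}_1$, by definition there exist universal constants $A, B > 0$ such that for every $\sigma \subset \mathcal{I}$ the weaving
$$\mathcal{F}_\sigma := \{f_i\}_{i \in \sigma} \cup \{g_i\}_{i \in \sigma^c}$$
is a $K$-frame for $\mathcal{H}_1$ with uniform bounds $A$ and $B$.

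Next I would fix an arbitrary $\sigma \subset \mathcal{I}$ and apply Lemma \ref{lem_K_intertwine1} to the $K$-frame $\mathcal{F}_\sigma$. Observe that $T\mathcal{F}_\sigma = \{Tf_i\}_{i \in \sigma} \cup \{Tg_i\}_{i \in \sigma^c}$, which is exactly the corresponding weaving of $T\mathcal{F}$ and $T\mathcal{G}$. The lemma then guarantees that this collection is a $TKT^*$-frame for $\mathcal{H}_2$ with bounds
$$\frac{A}{\|T\|^2} \quad \text{and} \quad B\|T\|^2.$$

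The crucial point is that these bounds depend only on $A$, $B$, and $\|T\|$, none of which depend on the choice of $\sigma$. Hence the same pair of constants serves as universal lower and upper $TKT^*$-frame bounds for every weaving, proving that $T\mathcal{F}$ and $T\mathcal{G}$ are $TKT^*$-woven in $\mathcal{H}_2$.

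There is essentially no obstacle here beyond bookkeeping: the proof is a one-line application of the previous lemma to each member of the family $\{\mathcal{F}_\sigma\}_{\sigma \subset \mathcal{I}}$. The only thing one must be careful about is confirming that the constants produced by Lemma \ref{lem_K_intertwine1} are intrinsic to $A$, $B$, $\|T\|$ and do not hide any $\sigma$-dependence, so that the universal nature of the bounds is preserved under $T$.
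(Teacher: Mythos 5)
Your proof is correct and follows exactly the route the paper takes: the paper's own proof is simply a citation of Lemma \ref{lem_K_intertwine1}, applied weaving-by-weaving. Your added observation that the resulting bounds $\frac{A}{\|T\|^2}$ and $B\|T\|^2$ are independent of $\sigma$, so the universality of the bounds is preserved, is the one detail the paper leaves implicit, and you have handled it correctly.
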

\begin{proof}
Applying Lemma \ref{lem_K_intertwine1}, our assertion is tenable.
\end{proof}

\begin{prop}\label{prop_K_intertwine2}
	Suppose $\{f_i\}_{i \in \mathcal I}, \{g_i\}_{i \in \mathcal I} \subset \mathcal H_1$ and $K \in \mathcal  L(\mathcal H_2)$. Consider $T \in \mathcal L(\mathcal H_1, \mathcal H_2)$ to be one-one and $R(T)$ is closed so that $\{T f_i\}_{i \in \mathcal I}$ and $\{T g_i\}_{i \in \mathcal I}$  are $K$-woven in $R(T)$ with the universal bounds $A, B$. Then $\{ f_i\}_{i \in \mathcal I}$  and $\{ g_i\}_{i \in \mathcal I}$ are  $T^{\dag} K T$-woven in $\mathcal H_1$ with the universal bounds $\frac {A} {\|T\|^2}$, $B \|T^{\dag}\|^2$.
\end{prop}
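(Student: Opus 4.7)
The plan is to deduce this proposition from Lemma \ref{lem_K_intertwine2} by applying it weaving-by-weaving. Unpacking the definition of $K$-woven-ness, the hypothesis says that for \emph{every} subset $\sigma \subset \mathcal I$, the family $\{Tf_i\}_{i\in\sigma}\cup\{Tg_i\}_{i\in\sigma^c}$ is a $K$-frame for $R(T)$ with the same bounds $A$ and $B$. The natural strategy is therefore to fix an arbitrary such $\sigma$, form the combined sequence $h_i := f_i$ for $i\in\sigma$ and $h_i := g_i$ for $i\in\sigma^c$ in $\mathcal H_1$, and observe that $\{Th_i\}_{i\in\mathcal I}$ is precisely this $K$-frame for $R(T)$.

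Next I would invoke Lemma \ref{lem_K_intertwine2} on the sequence $\{h_i\}_{i\in\mathcal I}$. All of its hypotheses are in force: $T \in \mathcal L(\mathcal H_1,\mathcal H_2)$ is one-one with closed range, $K \in \mathcal L(\mathcal H_2)$, and $\{Th_i\}$ is a $K$-frame for $R(T)$ with bounds $A, B$. The lemma then delivers, verbatim, that $\{h_i\}_{i\in\mathcal I} = \{f_i\}_{i\in\sigma}\cup\{g_i\}_{i\in\sigma^c}$ is a $T^{\dag}KT$-frame for $\mathcal H_1$ with bounds $A/\|T\|^2$ and $B\|T^{\dag}\|^2$.

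The essential point to record is that these transferred bounds depend only on $A$, $B$, $\|T\|$ and $\|T^{\dag}\|$, none of which involves the chosen partition $\sigma$. Hence the bounds $A/\|T\|^2$ and $B\|T^{\dag}\|^2$ are universal across all weavings, which by definition means $\{f_i\}_{i\in\mathcal I}$ and $\{g_i\}_{i\in\mathcal I}$ are $T^{\dag}KT$-woven in $\mathcal H_1$ with exactly the claimed bounds.

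I do not foresee any real obstacle in this argument: the analytic content has already been absorbed into Lemma \ref{lem_K_intertwine2}, and the only item to double-check is that the bounds produced there remain uniform when the lemma is iterated over every partition. Since those bounds are given by fixed expressions in $A, B, \|T\|, \|T^{\dag}\|$, this uniformity is immediate, so the proof essentially amounts to quoting the lemma and reading off the constants.
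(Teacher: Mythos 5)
Your proposal is correct and is essentially the paper's own argument: the paper's proof consists of a single sentence citing Lemma \ref{Moore} and Lemma \ref{lem_K_intertwine2}, and your write-up simply makes explicit the weaving-by-weaving application of Lemma \ref{lem_K_intertwine2} together with the observation that the resulting bounds $A/\|T\|^2$ and $B\|T^{\dag}\|^2$ do not depend on the partition $\sigma$. This is exactly the intended reasoning, spelled out in more detail than the paper gives.
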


\begin{proof} The proof will be followed from Lemma \ref{Moore} and Lemma \ref{lem_K_intertwine2}.
\end{proof}

In the following result we provide a necessary and sufficient conditions for woven frames by means of $K$-woven frames.

\begin{prop}\label{equivalency1}
	Suppose $K\in \mathcal L(\mathcal H)$. Then the following statements are equivalent:
 \begin{enumerate}
 \item $\{f_i\}_{i \in \mathcal{I}}$ and $\{g_i\}_{i \in \mathcal{I}}$ are woven in $R(K^*)$.

  \item $\{Kf_i\}_{i \in \mathcal{I}}$ and $\{Kg_i\}_{i \in \mathcal{I}}$ are $K$-woven in $\mathcal H$.
  \end{enumerate}
\end{prop}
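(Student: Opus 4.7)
The approach rests on the adjoint identity $\langle K^*h, f_i\rangle = \langle h, Kf_i\rangle$, which lets one shuttle frame sums between evaluating $\{f_i\}_{i\in\mathcal I}$ on vectors of $R(K^*)$ and evaluating $\{Kf_i\}_{i\in\mathcal I}$ on vectors of $\mathcal H$ (and likewise for $\{g_i\}$ versus $\{Kg_i\}$). The plan is to prove each direction by fixing an arbitrary partition $\sigma\subset\mathcal I$, applying this identity to the combined weaving sum, and transferring the frame bounds from one side to the other.

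For $(1)\Rightarrow(2)$, I will take any $h\in\mathcal H$, observe that $K^*h\in R(K^*)$, and substitute $K^*h$ into the frame inequality satisfied by the weaving $\{f_i\}_{i\in\sigma}\cup\{g_i\}_{i\in\sigma^c}$ on $R(K^*)$ with universal bounds $A,B$. The adjoint identity rewrites the middle term as $\sum_{i\in\sigma}|\langle h,Kf_i\rangle|^2+\sum_{i\in\sigma^c}|\langle h,Kg_i\rangle|^2$, while the endpoints contribute $A\|K^*h\|^2$ below and $B\|K^*h\|^2\leq B\|K\|^2\|h\|^2$ above. Since $\sigma$ was arbitrary, this is exactly the $K$-frame/Bessel inequality with universal constants $A$ and $B\|K\|^2$.

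For $(2)\Rightarrow(1)$, I will pick $f\in R(K^*)$, write $f=K^*h$ for some $h\in\mathcal H$, and use the same identity to recast the $\{f_i\},\{g_i\}$-weaving sum at $f$ as the $\{Kf_i\},\{Kg_i\}$-weaving sum at $h$. The $K$-frame lower bound instantly delivers $A\|K^*h\|^2=A\|f\|^2$. The upper bound is the main obstacle: at first pass it only yields $B\|h\|^2$, and an arbitrary preimage $h$ need not obey any estimate $\|h\|\leq C\|f\|$. I will resolve this by selecting the minimum-norm preimage $h=K^{*\dag}f$ via the Moore--Penrose pseudoinverse from Lemma \ref{Moore} (which uses the closed-range context implicit throughout the paper), producing $\|h\|\leq\|K^{*\dag}\|\|f\|$ and hence the universal upper weaving bound $B\|K^{*\dag}\|^2$. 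Together with the lower bound $A$, this certifies that $\mathcal F$ and $\mathcal G$ are woven in $R(K^*)$ and closes the equivalence.
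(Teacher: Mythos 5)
Your proof follows essentially the same route as the paper: both directions rest on the identity $\langle K^*h,f_i\rangle=\langle h,Kf_i\rangle$, substituting $K^*h$ into the weaving inequality on $R(K^*)$ for $(1)\Rightarrow(2)$ and lifting $f\in R(K^*)$ to a preimage $h$ for $(2)\Rightarrow(1)$; the lower bounds come out identically. The only divergence is in the upper bounds. The paper dispatches both by citing Proposition \ref{Bessel}, which silently presupposes that the underlying families are already Bessel on the relevant space --- something that is not automatic in the direction $(2)\Rightarrow(1)$, since an arbitrary preimage $h$ of $f=K^*h$ need not satisfy $\|h\|\lesssim\|f\|$. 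You correctly identify this as the real obstacle and resolve it by taking the minimum-norm preimage $h=K^{*\dag}f$, obtaining the explicit universal bounds $B\|K\|^2$ and $B\|K^{*\dag}\|^2$. This is a more careful treatment than the paper's, but note that it requires $R(K)$ (equivalently $R(K^*)$) to be closed, a hypothesis the proposition does not state; without it the pseudoinverse step is unavailable and, as your own discussion suggests, the upper weaving bound in $(2)\Rightarrow(1)$ genuinely need not hold. You should therefore either add the closed-range hypothesis explicitly or flag that the equivalence, as literally stated, is only established under it.
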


\begin{proof}
 \noindent \underline {{(1) $\Rightarrow$ (2)}}

	Let $\{f_i\}_{i \in \mathcal{I}}$ and $\{g_i\}_{i \in \mathcal{I}}$ be woven in $R(K^*)$ with the universal bounds $A, B$, then for every $\sigma \subset \mathcal{I}$ and every $f \in R(K^*)$ we have,
\begin{equation} \label{necessary}
A\|f\|^2 \leq \sum\limits_{i \in \sigma} |\langle f, f_i \rangle |^2 + \sum\limits_{i\in {\sigma}^c} | \langle f, g_i \rangle |^2 \leq B\|f\|^2.
\end{equation}
	
Moreover, for every $f \in \mathcal{H}$, $K^*f \in R(K^*)$ and	therefore using equation (\ref{necessary}), for every $\sigma \subset \mathcal{I}$ and for every $f \in \mathcal{H}$ we obtain,
	\begin{eqnarray*}
		\sum \limits _{i \in \sigma} |\langle f, Kf_i \rangle|^2 + \sum\limits_{i \in {\sigma}^c} | \langle f, Kg_i \rangle |^2
 & = &	\sum\limits_{i \in \sigma} |\langle K^* f, f_i \rangle|^2
 \\ &+ & \sum\limits_{i \in {\sigma}^c} | \langle K^* f, g_i \rangle |^2
\\ & \geq & A \|K^* f\|^2.
	\end{eqnarray*}
	The upper bound of the same weaving will be executed by Proposition  \ref{Bessel}.\\
	
 \noindent \underline {{(2) $\Rightarrow$ (1)}} Suppose $\{Kf_i\}_{i \in \mathcal{I}}$ and $\{Kg_i\}_{i \in \mathcal{I}}$ are $K$- woven with the universal bounds $C, D$. Then for every $\sigma \subset \mathcal{I}$ and for every $f \in \mathcal{H}$ we have,
	\begin{equation}\label{converse}
		\sum\limits_{i \in \sigma} |\langle f, Kf_i \rangle|^2 + \sum\limits_{i \in {\sigma}^c} | \langle f, Kg_i \rangle |^2
 \geq  C \|K^* f\|^2.
	\end{equation}
Again for every $g \in R(K^*)$, there exists $f \in \mathcal H$ so that $g=K^*f$ and hence using equation (\ref{converse}), for every $\sigma \subset \mathcal{I}$ and for every $g \in R(K^*)$ we obtain,
\begin{eqnarray*}
 \sum\limits_{i \in \sigma} |\langle g, f_i \rangle|^2 + \sum\limits_{i \in {\sigma}^c} | \langle g, g_i \rangle |^2
& = &  \sum\limits _{i \in \sigma} |\langle f, K f_i \rangle|^2 + \sum\limits_{i \in {\sigma}^c} | \langle f, K g_i \rangle |^2
\\&\geq & C \|K^*f\|^2
\\& = & C \|g\|^2.
\end{eqnarray*}
	
	The upper bound of the same weaving will achieved by Proposition \ref{Bessel}.
\end{proof}

Next result provides a characterization of woven frames through $K$-woven frames.

\begin{prop} \label{equivalency2}
Let $\mathcal F=\{f_i\}_{i \in \mathcal I}, ~\mathcal G=\{g_i\}_{i \in \mathcal I}\subset \mathcal H$ and $K \in \mathcal{L}(\mathcal H)$. Then
\begin{enumerate}
\item $\mathcal F, \mathcal G$ are woven  in $R(K)$ implies they are $K$-woven in $\mathcal H$.

\item $\mathcal F, \mathcal G$ are $K$-woven in $R(K)$ implies they are woven  in $R(K)$, provided $R(K)$ is closed.
\end{enumerate}
\end{prop}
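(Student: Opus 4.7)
Both implications reduce to fixing an arbitrary partition $\sigma \subset \mathcal I$, invoking the given (woven or $K$-woven) inequality, and then translating between $\|f\|$ and $\|K^* f\|$ via the geometry of $R(K)$. In each case the upper (Bessel) bound is immediate from Proposition~\ref{Bessel}, so only the lower bound requires work.

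For (1), the plan is to apply the woven-in-$R(K)$ hypothesis to $P_{R(K)} f$ for an arbitrary $f \in \mathcal H$. Using $R(K)^{\perp} = N(K^*)$, one has the identity $K^* f = K^* P_{R(K)} f$, whence $\|K^* f\|^2 \leq \|K\|^2 \|P_{R(K)} f\|^2$, or equivalently $\|P_{R(K)} f\|^2 \geq \|K^* f\|^2 / \|K\|^2$. Because the frame vectors lie in $R(K)$, the inner products $\langle P_{R(K)} f, f_i\rangle$ and $\langle P_{R(K)} f, g_i\rangle$ collapse to $\langle f, f_i\rangle$ and $\langle f, g_i\rangle$. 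Combining these two observations converts the hypothesised lower bound $A\|P_{R(K)} f\|^2$ into the $K$-frame lower bound $\frac{A}{\|K\|^2}\|K^* f\|^2$ for every weaving, valid on all of $\mathcal H$.

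For (2), the crucial input is Lemma~\ref{Moore}(2) applied with $T = K$: closedness of $R(K)$ supplies $\|f\| \leq \|K^{\dag}\|\,\|K^* f\|$ for every $f \in R(K) = K(\mathcal H)$. Squaring and substituting into the $K$-woven inequality upgrades the lower bound $A\|K^* f\|^2$ to the ordinary lower bound $\frac{A}{\|K^{\dag}\|^2}\|f\|^2$, which is exactly what is required for the weaving to be an (ordinary) frame for $R(K)$.

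The only delicate moment is identifying $\langle P_{R(K)} f, f_i\rangle$ with $\langle f, f_i\rangle$ in (1); this step is legitimate precisely because $\{f_i\}$ and $\{g_i\}$ are taken as frames for the subspace $R(K)$, placing the frame vectors in that subspace under the standard convention. Once this point is settled, the remainder of each argument is a one-line substitution of constants, and no deeper obstacle appears.
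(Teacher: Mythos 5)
Your proof is correct and follows essentially the same route as the paper: part (2) is verbatim the paper's argument via Lemma \ref{Moore}(2), and in part (1) your projection step (using $K^*f = K^*P_{\overline{R(K)}}f$ and $\|K^*f\| \leq \|K\|\,\|P_{\overline{R(K)}}f\|$, together with $\langle P_{\overline{R(K)}}f, f_i\rangle = \langle f, f_i\rangle$) supplies exactly the justification the paper leaves implicit when it asserts the lower bound $\frac{A}{\|K\|^2}\|K^*f\|^2$ for all $f \in \mathcal{H}$. The only point to polish is that when $R(K)$ is not closed you must project onto $\overline{R(K)}$ rather than $R(K)$ and extend the weaving inequality from $R(K)$ to its closure by continuity (using the Bessel bound), which is routine.
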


\begin{proof}
\begin{enumerate}
\item Suppose $\{f_i\}_{i \in \mathcal I}$ and $\{g_i\}_{i \in \mathcal I}$ are woven in $R(K)$ with the universal bounds $A, B$. Then for every $\sigma \subset \mathcal I$ and every $f \in \mathcal H$ we get,
\begin{eqnarray*}\frac {A} {\|K\|^2} \|K^* f\|^2 \leq \sum\limits_{i \in \sigma} |\langle f, f_i \rangle |^2 + \sum\limits_{i \in {\sigma}^c} | \langle f, g_i \rangle |^2 \leq B \|f\|^2.
\end{eqnarray*}

\item Let $\{f_i\}_{i \in \mathcal I}$ and $\{g_i\}_{i \in \mathcal I}$ be $K$-woven with the universal bounds $C, D$. Applying closed range property of $K$ (see Lemma \ref{Moore}), for every $f \in R(K)$ we have $\frac {\|f\|^2} {\|K^{\dag}\|^2} \leq \|K^* f\|^2$ and therefore for every $\sigma \subset \mathcal I$ and every $f \in R(K)$ we obtain,
\begin{eqnarray*}\frac {C} {\|K^{\dag}\|^2} \|f\|^2 \leq \sum\limits_{i \in \sigma} |\langle f, f_i \rangle |^2 + \sum\limits_{i \in {\sigma}^c} | \langle f, g_i \rangle |^2 \leq D \|f\|^2.
\end{eqnarray*}
\end{enumerate}
\end{proof}

In the following results we discuss stability of $K$-woven-ness under perturbation and erasures. Analogous erasure result for frame can be observed in \cite{CaLy15}.

\begin{thm} \label{pert}
	Let $T, K \in \mathcal{L}(\mathcal H)$ with $K$ has closed range  and suppose for every $f \in \mathcal H$ we have, $\|(T^* - K^*)f\| \leq \alpha_1 \|T^*f\| + \alpha_2 \|K^*f\| + \alpha_3 \|f\|$, for some $\alpha_1, \alpha_2, \alpha_3  \in (0,1)$. Then $\{f_i\}_{i \in \mathcal{I}}$ and $\{g_i\}_{i \in \mathcal{I}}$ are $T$- woven if  they are $K$- woven, in $R(K)$.
\end{thm}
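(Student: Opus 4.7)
The plan is to transfer the $K$-woven lower bound into a $T$-woven lower bound by dominating $\|T^{*}f\|$ in terms of $\|K^{*}f\|$ for $f\in R(K)$; the upper Bessel-type bound $B\|f\|^{2}$ needs no adjustment since it is shared by the definitions of $K$-frame and $T$-frame.

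First, I would write $T^{*}f = K^{*}f + (T^{*}-K^{*})f$, apply the triangle inequality, and invoke the hypothesis to obtain
$$\|T^{*}f\| \le \alpha_{1}\|T^{*}f\| + (1+\alpha_{2})\|K^{*}f\| + \alpha_{3}\|f\|.$$
Since $\alpha_{1}\in(0,1)$, the $\alpha_{1}\|T^{*}f\|$ term can be absorbed on the left, yielding an estimate of $\|T^{*}f\|$ by a positive linear combination of $\|K^{*}f\|$ and $\|f\|$.

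The decisive step is to get rid of the $\|f\|$ contribution when $f\in R(K)$. Because $R(K)$ is closed, Lemma \ref{Moore}(2) gives $\|f\| \le \|K^{\dagger}\|\,\|K^{*}f\|$ on $R(K)$. Substituting this into the previous inequality produces a uniform constant
$$M \;=\; \tfrac{1+\alpha_{2}+\alpha_{3}\|K^{\dagger}\|}{1-\alpha_{1}}$$
with $\|T^{*}f\| \le M\,\|K^{*}f\|$ for every $f\in R(K)$.

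Finally, for an arbitrary partition $\sigma \subset \mathcal I$ and any $f\in R(K)$, combining this with the $K$-woven lower bound $A$ gives
$$\sum_{i\in\sigma}|\langle f,f_{i}\rangle|^{2} + \sum_{i\in\sigma^{c}}|\langle f,g_{i}\rangle|^{2} \;\ge\; A\|K^{*}f\|^{2} \;\ge\; \tfrac{A}{M^{2}}\|T^{*}f\|^{2},$$
while the common upper bound $B\|f\|^{2}$ continues to apply; hence $\{f_{i}\}$ and $\{g_{i}\}$ are $T$-woven in $R(K)$ with universal constants $A/M^{2}$ and $B$. I do not expect a serious obstacle: the only points requiring care are the absorption step, which works precisely because $\alpha_{1}<1$, and the removal of the $\|f\|$ term, which is exactly where the closed-range hypothesis on $K$ is essential.
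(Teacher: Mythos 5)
Your proposal is correct and follows essentially the same route as the paper: the reverse triangle inequality $\|T^{*}f\|\le\|K^{*}f\|+\|(T^{*}-K^{*})f\|$, absorption of the $\alpha_{1}\|T^{*}f\|$ term, and elimination of $\|f\|$ via Lemma \ref{Moore}(2) on the closed range $R(K)$, arriving at the identical constant $A\bigl(\tfrac{1-\alpha_{1}}{1+\alpha_{2}+\alpha_{3}\|K^{\dagger}\|}\bigr)^{2}$. No gaps.
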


\begin{proof}
	Let $\{f_i\}_{i \in \mathcal I}$ and $\{g_i\}_{i \in \mathcal I}$ be $K$- woven with the universal bounds $A, B$. Then for every $\sigma \subset \mathcal{I}$ and every $f \in R(K)$ we have,
	\begin{eqnarray} \label{paley}
~~~~~~A\|K^*f\|^2 \leq \sum\limits_{i \in \sigma} |\langle f, f_i \rangle |^2 + \sum\limits_{i \in {\sigma}^c} | \langle f, g_i \rangle |^2 \leq B \|f\|^2.
	\end{eqnarray}
	
	
Since $\|K^*f\| \geq  \|T^*f\| - \|(T^* - K^*)f\|$ for every $f\in \mathcal H$, applying closed range property of $K$ (see Lemma \ref{Moore}) and using given perturbation condition for every $f \in R(K)$ we obtain,
	$$(1 - \alpha_1) \|T^*f\| \leq (1 + \alpha_2 + \alpha_3 \|K^{\dag}\|) \|K^*f\|.$$
	Therefore, using  equation (\ref{paley}), for every $f \in R(K)$ and every $\sigma \subset \mathcal{I} $ we obtain,
	\begin{eqnarray*}
 A \left (\frac{1 - \alpha_1} { 1+ \alpha_2 + \alpha_3 \|K^{\dag}\|}\right)^2 \|T^*f\|^2
& \leq &  \sum\limits_{i \in \sigma} |\langle f, f_i \rangle|^2
\\ &+ & \sum\limits_{i \in {\sigma}^c} | \langle f, g_i \rangle|^2
\\ & \leq &  B\|f\|^2.
	\end{eqnarray*}
	
\end{proof}

\begin{cor}
Let $T, K \in \mathcal{L}(\mathcal H)$ and suppose $\alpha_1, \alpha_2 \in (0,1)$ so that for every $f \in \mathcal H$ we have,
$\|T^* f - K^*f\| \leq \alpha_1 \|T^* f\| + \alpha_2 \|K^*f\|$. Then  $\{f_i\}_{i \in \mathcal{I}}$ and $\{g_i\}_{i \in \mathcal{I}}$ are $T$-woven if and only if they are $K$-woven.
\end{cor}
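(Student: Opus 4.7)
The plan is to apply Theorem \ref{pert} twice, exploiting the symmetry between $T$ and $K$ that emerges when the $\alpha_3\|f\|$ term is absent. The hypothesis $\|T^*f-K^*f\|\leq \alpha_1\|T^*f\|+\alpha_2\|K^*f\|$ is symmetric in $(T,\alpha_1)$ and $(K,\alpha_2)$, so the two implications will have parallel proofs.

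For the forward direction, suppose $\{f_i\}_{i\in\mathcal I},\{g_i\}_{i\in\mathcal I}$ are $K$-woven with universal bounds $A,B$. Using the triangle inequality $\|T^*f\|\leq \|K^*f\|+\|(T^*-K^*)f\|$ together with the hypothesis, I would derive
$$(1-\alpha_1)\|T^*f\|\leq (1+\alpha_2)\|K^*f\|\quad \text{for every } f\in\mathcal H.$$
Note that, unlike in Theorem \ref{pert}, this inequality holds on all of $\mathcal H$ without needing closed range of $K$, because the $\alpha_3\|f\|$ term is not present to force the Moore-Penrose detour. Substituting this into the lower $K$-frame estimate for each weaving $\{f_i\}_{i\in\sigma}\cup\{g_i\}_{i\in\sigma^c}$ yields
$$A\left(\tfrac{1-\alpha_1}{1+\alpha_2}\right)^{\!2}\|T^*f\|^2 \leq \sum_{i\in\sigma}|\langle f,f_i\rangle|^2+\sum_{i\in\sigma^c}|\langle f,g_i\rangle|^2\leq B\|f\|^2,$$
so the collections are $T$-woven with universal bounds $A\left(\tfrac{1-\alpha_1}{1+\alpha_2}\right)^{\!2}$ and $B$.

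For the converse, I would rewrite the hypothesis as $\|K^*f-T^*f\|\leq \alpha_2\|K^*f\|+\alpha_1\|T^*f\|$ and repeat the argument with the roles of $T$ and $K$ interchanged. This yields $(1-\alpha_2)\|K^*f\|\leq (1+\alpha_1)\|T^*f\|$ for all $f\in\mathcal H$, and hence if $\{f_i\}_{i\in\mathcal I},\{g_i\}_{i\in\mathcal I}$ are $T$-woven with universal bounds $C,D$, they are $K$-woven with bounds $C\left(\tfrac{1-\alpha_2}{1+\alpha_1}\right)^{\!2}$ and $D$.

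There is no real obstacle here: the result is essentially a clean specialization of Theorem \ref{pert}. The only point worth flagging in the write-up is that setting $\alpha_3=0$ removes both the need for closed range and the asymmetry between $T$ and $K$, so one obtains a genuine equivalence rather than a one-way implication restricted to $R(K)$.
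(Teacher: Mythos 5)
Your argument is correct and is exactly the specialization the paper intends: the corollary is stated without proof, and your derivation of $(1-\alpha_1)\|T^*f\|\leq(1+\alpha_2)\|K^*f\|$ (and its mirror image) on all of $\mathcal H$ supplies the missing details. One small remark: you cannot literally ``apply Theorem \ref{pert} twice'' since that theorem assumes $K$ has closed range and only yields the conclusion on $R(K)$, but your write-up in fact reruns the estimate directly with $\alpha_3=0$, which is the right fix and is why the closed-range hypothesis and the restriction to $R(K)$ both disappear.
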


\begin{thm}\label{Thm:erasure1}
Let $\mathcal{F}=\{f_i\}_{i\in \mathcal{I}}$ and $\mathcal{G}=\{g_i\}_{i \in \mathcal{I}}$ be K-woven  in $ \mathcal{H}_1 $ with universal lower bound $A$ and $T \in \mathcal{L}(\mathcal{H}_1,\mathcal{H}_2)$. Let us assume $ \mathcal{J} \subset \mathcal{I}$ and $ 0<C<\frac{A}{\|T\|^2}$ so that for every  $ f \in \mathcal{H}_2 $
	\begin{eqnarray} \label{inequa}
	\sum\limits_{i \in \mathcal J} | \langle f,Tf_i \rangle|^2 \leq C \|TK^*T^*f\|^2.
	\end{eqnarray}
	Then, $\{Tf_i\}_{i\in \mathcal I \setminus \mathcal J}$ and $\{Tg_i\}_{i\in \mathcal I \setminus \mathcal J}$ are $TKT^*$-woven in $\mathcal{H}_2$ .
\end{thm}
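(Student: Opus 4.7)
The plan is to reduce the erasure statement to an application of the already-established fact that $T\mathcal{F}$ and $T\mathcal{G}$ are $TKT^*$-woven (Proposition \ref{prop_K_intertwine1}) by choosing an adapted partition of $\mathcal{I}$ that absorbs the ``erased'' index set $\mathcal{J}$ into one side, and then subtracting the small contribution coming from $\mathcal{J}$ via hypothesis (\ref{inequa}).

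In detail, first I would invoke Proposition \ref{prop_K_intertwine1} to conclude that $T\mathcal{F}$ and $T\mathcal{G}$ are $TKT^*$-woven in $\mathcal{H}_2$. Tracing the constants through the proof of Lemma \ref{lem_K_intertwine1} (which is exactly what the proof of Proposition \ref{prop_K_intertwine1} uses), the universal lower bound for the $TKT^*$-woven family is $A/\|T\|^2$, where $A$ is the given universal lower $K$-frame bound of $\mathcal{F},\mathcal{G}$. Thus for every subset $\tau \subset \mathcal{I}$ and every $f \in \mathcal{H}_2$,
\begin{equation}\label{plan:ineq1}
\sum_{i \in \tau} |\langle f, Tf_i\rangle|^2 + \sum_{i \in \tau^c} |\langle f, Tg_i\rangle|^2 \;\geq\; \frac{A}{\|T\|^2}\,\|(TKT^*)^* f\|^2 = \frac{A}{\|T\|^2}\,\|TK^*T^*f\|^2.
\end{equation}

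Now, to verify the claimed weaving of $\{Tf_i\}_{i \in \mathcal{I}\setminus\mathcal{J}}$ and $\{Tg_i\}_{i\in \mathcal{I}\setminus\mathcal{J}}$, let $\sigma \subset \mathcal{I}\setminus\mathcal{J}$ be arbitrary and apply (\ref{plan:ineq1}) with the choice $\tau = \sigma \cup \mathcal{J}$, so that $\tau^c = (\mathcal{I}\setminus\mathcal{J})\setminus\sigma$. Splitting the first sum into the indices in $\sigma$ and in $\mathcal{J}$ and then bounding the $\mathcal{J}$-piece above using the hypothesis (\ref{inequa}), I obtain
\begin{equation*}
\sum_{i \in \sigma} |\langle f, Tf_i\rangle|^2 + \sum_{i \in (\mathcal{I}\setminus\mathcal{J})\setminus\sigma} |\langle f, Tg_i\rangle|^2 \;\geq\; \left(\frac{A}{\|T\|^2} - C\right)\|TK^*T^*f\|^2.
\end{equation*}
Since $C < A/\|T\|^2$, the constant on the right is strictly positive and independent of $\sigma$, giving the desired universal lower $TKT^*$-frame bound for every weaving. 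The matching universal Bessel upper bound is immediate from Proposition \ref{Bessel} applied to the two Bessel sequences $\{Tf_i\}_{i\in\mathcal{I}\setminus\mathcal{J}}$ and $\{Tg_i\}_{i\in\mathcal{I}\setminus\mathcal{J}}$.

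The only subtlety, and the main ``obstacle'' such as it is, is recognizing that the operator appearing on the right of (\ref{inequa}) is exactly the adjoint of the new operator $TKT^*$, i.e. $(TKT^*)^* = TK^*T^*$, so that the hypothesis is calibrated in the correct norm for the $TKT^*$-lower-frame inequality. Once this identification is made, the argument is essentially a bookkeeping exercise on partitions: absorb $\mathcal{J}$ into $\sigma$-side of a full partition of $\mathcal{I}$, quote the global $TKT^*$-woven lower bound, and peel $\mathcal{J}$ off using (\ref{inequa}).
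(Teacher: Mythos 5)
Your proposal is correct and follows essentially the same route as the paper's own proof: invoke Proposition \ref{prop_K_intertwine1} to get the universal lower bound $A/\|T\|^2$ for the full $TKT^*$-weavings, apply it to the partition $\sigma\cup\mathcal{J}$ versus its complement, and subtract the $\mathcal{J}$-contribution using (\ref{inequa}) together with the identity $(TKT^*)^*=TK^*T^*$, with Proposition \ref{Bessel} supplying the upper bound. No gaps.
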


\begin{proof} Since $\mathcal{F}, \mathcal{G}$ are K-woven  in $ \mathcal{H}_1 $, then by Lemma \ref{lem_K_intertwine1} and Proposition \ref{prop_K_intertwine1},  $T\mathcal{F}$ and $T\mathcal{G}$ are $TKT^*$-woven with universal lower bound $\frac{A}{\|T\|^2}$ in $\mathcal H_2$. Therefore, applying equation (\ref{inequa}), for every $\sigma \subset \mathcal{I} \setminus \mathcal{J}$ and for every $f \in \mathcal H_2$ we obtain,
	\begin{eqnarray*}
	&& \sum \limits_{i \in \sigma} | \langle f,Tf_i \rangle|^2 + \sum\limits_{i \in \sigma^c} |\langle f,Tg_i \rangle|^2
	\\ & =& \sum\limits_{i \in \sigma \cup \mathcal{J}} | \langle f,Tf_i \rangle|^2 +
 \sum\limits_{i \in \sigma^c} | \langle f,Tg_i \rangle|^2 - \sum\limits_{i \in \mathcal{J}} | \langle f,Tf_i\rangle |^2 \\
		& \geq & \frac{A}{\|T\|^2} \|(TKT^*)^*f\|^2 - C\|(TKT^*)^*f\|^2 \\
		& = & \left(\frac{A}{\|T\|^2}-C\right)\|(TKT^*)^*f\|^2,
	\end{eqnarray*}
	where $\sigma^c$ is the complement of $\sigma$ in $\mathcal I \setminus \mathcal J$.
	
	The universal upper bound will be followed by Proposition \ref{Bessel}.	
\end{proof}

By choosing $\mathcal H_1=\mathcal H_2$ and $T = I$, we obtain the following result.

\begin{cor}
Let $\mathcal{F}=\{f_i\}_{i\in \mathcal{I}}$ and $\mathcal{G}=\{g_i\}_{i \in \mathcal{I}}$ are K-woven in $\mathcal{H}$ with universal bounds A, B. Let us consider $ \mathcal{J} \subset \mathcal{I}$ and $0<C<A$ such that for every $f\in \mathcal{H}$,
	$$ \sum\limits_{i \in \mathcal{J}} | \langle f,f_i\rangle |^2 \leq C \|K^*f\|^2,$$
	then $\{f_i\}_{i \in \mathcal I \setminus \mathcal J}$ and $\{g_i\}_{i \in \mathcal I \setminus \mathcal J}$ are K-frames and also they are K-woven with universal bounds $(A- C), B$.
\end{cor}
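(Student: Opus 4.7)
The plan is to obtain this as a direct specialization of Theorem~\ref{Thm:erasure1}. Take $\mathcal{H}_1=\mathcal{H}_2=\mathcal{H}$ and $T = I$, the identity operator on $\mathcal{H}$. Then $\|T\| = 1$, so the admissibility range $0 < C < A/\|T\|^2$ collapses to the hypothesized $0 < C < A$, and $TKT^* = K$, so the hypothesis
\[
\sum_{i \in \mathcal{J}} |\langle f, Tf_i\rangle|^2 \;\leq\; C\,\|TK^*T^*f\|^2
\]
of Theorem~\ref{Thm:erasure1} becomes exactly the inequality assumed here. Note that the closed range assumption on $K$ from Theorem~\ref{pert} is not invoked by Theorem~\ref{Thm:erasure1}; only the woven hypothesis and the Bessel-type bound on $\mathcal{J}$ are needed.

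Applying Theorem~\ref{Thm:erasure1} directly gives that $\{f_i\}_{i \in \mathcal{I}\setminus\mathcal{J}}$ and $\{g_i\}_{i \in \mathcal{I}\setminus\mathcal{J}}$ are $K$-woven with universal lower bound $A/\|T\|^2 - C = A - C$, and the universal upper bound $B$ is inherited from the Bessel bound in Proposition~\ref{Bessel} (since no erasure can increase the sum of squares). To extract the individual $K$-frame conclusion, I would observe that in the reduced index set $\mathcal{I}\setminus\mathcal{J}$, the trivial partitions $\sigma = \mathcal{I}\setminus\mathcal{J}$ and $\sigma = \emptyset$ yield the weavings $\{f_i\}_{i \in \mathcal{I}\setminus\mathcal{J}}$ and $\{g_i\}_{i \in \mathcal{I}\setminus\mathcal{J}}$ respectively; the universal $K$-woven bounds $(A-C, B)$ therefore witness that each collection is by itself a $K$-frame with those same bounds.

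There is no real obstacle: the proof is essentially bookkeeping after specializing $T = I$. The only point worth flagging is the verification that the universal upper bound persists through erasure, which is immediate because deleting terms cannot enlarge a nonnegative sum, so the Bessel estimate from Proposition~\ref{Bessel} carries over verbatim to any $\sigma \subset \mathcal{I}\setminus\mathcal{J}$.
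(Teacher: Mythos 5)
Your proposal is correct and follows exactly the paper's route: the corollary is stated there as the specialization $\mathcal{H}_1=\mathcal{H}_2=\mathcal{H}$, $T=I$ of Theorem~\ref{Thm:erasure1}, which is precisely what you do. Your added remarks on extracting the individual $K$-frame claim from the trivial partitions and on the persistence of the upper Bessel bound are sound bookkeeping that the paper leaves implicit.
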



Using Proposition \ref{prop_K_intertwine2}, we get the following result analogous to Theorem \ref{Thm:erasure1}.
\begin{thm}\label{Thm:erasure2}
Let $\{f_i\}_{i \in \mathcal I}, \{g_i\}_{i \in \mathcal I} \subset \mathcal H_1$ and $K \in \mathcal  L(\mathcal H_2)$. Suppose $T \in \mathcal L(\mathcal H_1, \mathcal H_2)$ is one-one and $R(T)$ is closed so that $\{T f_i\}_{i \in \mathcal I}$ and $\{T g_i\}_{i \in \mathcal I}$  are $K$-woven in $R(T)$ with the universal lower bound $A$. Further suppose  $ \mathcal{J} \subset \mathcal{I}$ and $ 0<C<\frac{A}{\|T\|^2}$ so that for every  $ f \in \mathcal{H}_1 $
	\begin{eqnarray} \label{inequal}
	\sum\limits_{i \in \mathcal J} | \langle f, f_i \rangle|^2 \leq C \|(T^\dag K T)^*f\|^2.
	\end{eqnarray}
	Then, $\{f_i\}_{i\in \mathcal I \setminus \mathcal J}$ and $\{g_i\}_{i\in \mathcal I \setminus \mathcal J}$ are $T^{\dag} K T$-woven in $\mathcal{H}_1$ .
\end{thm}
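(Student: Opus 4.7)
The plan is to mirror the proof of Theorem \ref{Thm:erasure1}, replacing the role of Proposition \ref{prop_K_intertwine1} with that of Proposition \ref{prop_K_intertwine2}. Since $T$ is one-one with closed range and $\{Tf_i\}_{i\in\mathcal I}$, $\{Tg_i\}_{i\in\mathcal I}$ are $K$-woven in $R(T)$ with universal lower bound $A$, an immediate application of Proposition \ref{prop_K_intertwine2} pulls the $K$-woven-ness back through $T$: the families $\{f_i\}_{i\in\mathcal I}$ and $\{g_i\}_{i\in\mathcal I}$ are $T^{\dagger}KT$-woven in $\mathcal H_1$ with universal lower bound $A/\|T\|^2$. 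This is the first step.

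Next, I would fix an arbitrary $\sigma \subset \mathcal I \setminus \mathcal J$ and let $\sigma^c$ denote its complement inside $\mathcal I \setminus \mathcal J$. The key set-theoretic observation is that $\{\sigma \cup \mathcal J,\ \sigma^c\}$ is a partition of the full index set $\mathcal I$, so applying the $T^{\dagger}KT$-woven lower bound from Step 1 to this partition yields
\begin{equation*}
\sum_{i \in \sigma \cup \mathcal J} |\langle f, f_i\rangle|^2 + \sum_{i \in \sigma^c}|\langle f, g_i\rangle|^2 \;\geq\; \frac{A}{\|T\|^2}\,\|(T^{\dagger}KT)^*f\|^2
\end{equation*}
for every $f \in \mathcal H_1$. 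Splitting off the $\mathcal J$-part and invoking hypothesis (\ref{inequal}) to bound $\sum_{i\in\mathcal J}|\langle f,f_i\rangle|^2 \le C\,\|(T^{\dagger}KT)^*f\|^2$ gives
\begin{equation*}
\sum_{i \in \sigma}|\langle f, f_i\rangle|^2 + \sum_{i \in \sigma^c}|\langle f, g_i\rangle|^2 \;\geq\; \left(\frac{A}{\|T\|^2}-C\right)\|(T^{\dagger}KT)^*f\|^2,
\end{equation*}
and the assumption $C < A/\|T\|^2$ makes this constant strictly positive, as required.

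Finally, the universal upper Bessel bound is cost-free: since the ambient families $\{f_i\}$, $\{g_i\}$ are already Bessel in $\mathcal H_1$ (being $T^{\dagger}KT$-frames, by Step 1), restricting the index set to $\mathcal I \setminus \mathcal J$ only reduces the Bessel sum, so Proposition \ref{Bessel} supplies a uniform upper bound for every weaving. I do not foresee a real obstacle: the whole argument is a transcription of Theorem \ref{Thm:erasure1} with $TKT^*$ replaced by $T^{\dagger}KT$ and with the bound $A/\|T\|^2$ coming from Proposition \ref{prop_K_intertwine2} rather than Proposition \ref{prop_K_intertwine1}. The only point deserving care is making sure the partition used on $\mathcal I$ recombines correctly when one restricts attention to $\mathcal I \setminus \mathcal J$, which is exactly the bookkeeping already performed in the proof of Theorem \ref{Thm:erasure1}.
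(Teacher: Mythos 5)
Your proposal is correct and follows exactly the route the paper intends: the paper itself gives no written proof but states that the result follows from Proposition \ref{prop_K_intertwine2} analogously to Theorem \ref{Thm:erasure1}, and your transcription (pull back the $K$-woven-ness via Proposition \ref{prop_K_intertwine2} to get lower bound $A/\|T\|^2$, apply the weaving bound to the partition $\{\sigma\cup\mathcal J,\ \sigma^c\}$ of $\mathcal I$, subtract the $\mathcal J$-sum using (\ref{inequal}), and invoke Proposition \ref{Bessel} for the upper bound) is precisely that argument.
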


	
	

{\bf{Acknowledgment}}

The first author acknowledges the fiscal support of MHRD, Government of India and the third author is supported by DST-SERB project MTR/2017/000797. 


\end{document}